% -*- mode: latex; TeX-PDF-mode: t; -*-
\documentclass[12pt]{article}

\usepackage{diagbox}
\usepackage{mathtools}
\usepackage{bbm}
\usepackage{latexsym}
\usepackage{epsfig}
\usepackage{amsmath,amsthm,amssymb,enumerate,hyperref}
\usepackage{comment}
\usepackage[active]{srcltx}
\parindent 0in
\parskip 2.5ex
\usepackage{color}
\def\red{\color{black}}

\newcommand{\LEM}[2]{\begin{lemma}\label{#1}#2\end{lemma}}

\newcounter{rot}%\addtocounter{rot}{1}, \therot

% Greek letters

\def\a{\alpha}   
 \def\f{\phi}   \def\g{\gamma}
  
     \def\l{\lambda}
 \def\m{\mu}  
\def\r{\rho}  \def\s{\sigma} 
 \def\om{\omega}

\def\cP{{\cal P}}

\def\cB{{\cal B}}

%Layouts
\newtheorem{theorem}{Theorem}
\newtheorem{lemma}[theorem]{Lemma}

%%%%%%%%%%%%%%%%%%%%%%math stuff%%%%%%%%%%%%%%%%

\newcommand{\wh}[1]{\widehat{#1}}

\newcommand{\rdup}[1]{{\left\lceil #1\right\rceil }}

\newcommand{\brac}[1]{\left(#1\right)}

\newcommand{\bfrac}[2]{\left(\frac{#1}{#2}\right)}

\def\cE{{\cal E}}

\newcommand{\set}[1]{\left\{#1\right\}}

\def\Pr{\mathbb{P}}

%%%%%%%%%%%%%%%%%%%%%%%%%%%%%%%%%%%%%%%%%%%%%%%%%%%%%%%%%%%%%%%%%%%%%%%%
%specific to this paper
\newcommand{\ignore}[1]{}

\newcommand{\card}[1]{\left|#1\right|}

\newcommand{\beq}[2]{\begin{equation}\label{#1}#2\end{equation}}

\def\cG{\mathcal{G}}

\newcommand{\multstar}[1]{\begin{multline*}#1\end{multline*}}

\usepackage{tikz}
\usetikzlibrary{arrows}
\usetikzlibrary{decorations}
\usetikzlibrary{shapes.misc}

\begin{document}
\author{Debsoumya Chakraborti\thanks{Department of Mathematical Sciences, Carnegie Mellon University, Pittsburgh, PA15213, USA}, Alan Frieze\thanks{Department of Mathematical Sciences, Carnegie Mellon University, Pittsburgh, PA15213, USA, Research supported in part by NSF grant DMS1661063}, Simi Haber\thanks{Department of Mathematics, Bar-Ilan University. This research was partially supported by the BIU Center for Research in Applied Cryptography and Cyber Security in conjunction with the Israel National Directorate in the Prime Minister’s office.}, Mihir Hasabnis\thanks{Department of Mathematical Sciences, Carnegie Mellon University, Pittsburgh, PA15213, USA}}

\title{Isomorphism for Random $k$-Uniform Hypergraphs }
\maketitle

\begin{abstract}
We study the isomorphism problem for random hypergraphs. We show that it is solvable in polynomial time for the binomial random $k$-uniform hypergraph $H_{n,p;k}$, for a wide range of $p$. We also show that it is solvable w.h.p. for random $r$-regular, $k$-uniform hypergraphs $H_{n,r;k},r=O(1)$.
\end{abstract}

\section{Introduction}
In this note we study the isomorphism problem for two models of random $k$-uniform hypergraphs, $k\geq 3$. A hypergraph is $k$-uniform if all of its edges are of size $k$. The graph isomorphism problem for random graphs is well understood and in this note we extend some of the ideas to hypergraphs.

The first paper to study graph isomorphism in this context was that of Babai, Erd\H{o}s and Selkow \cite{BES}. They considered the model $G_{n,p}$ where $p$ is a constant independent of $n$. {\red They showed that w.h.p.\footnote{A sequence of events $\cE_n,n\geq 1$ occurs with high probability (w.h.p.) if $\lim_{n\to\infty}\Pr(\cE_n)=1$.} a {\em canonical labelling} of $G=G_{n,p}$  can be constructed in $O(n^2)$ time.} In a canonical labelling we assign a unique label to each vertex of a graph such that labels are invariant under isomorphism. It follows that two graphs with the same vertex set are isomorphic, if and only if the labels coincide. (This includes the case where one graph has a unique labeling and the other does not. In which case the two graphs are not isomorphic.) The failure probability for their algorithm was bounded by $O(n^{-1/7})$.  Karp \cite{K}, Lipton \cite{L} and Babai and Kucera \cite{BK} reduced the failure probability to $O(c^n),c<1$. These papers consider $p$ to be constant and the paper of Czajka and Pandurangan \cite{CP} allows $p=p(n)=o(1)$. We use the following result from \cite{CP}: the notation $A_n\gg B_n$ means that $A_n/B_n\to\infty$ as $n\to \infty$.
\begin{theorem}\label{th1}
Suppose that $p\gg\frac{\log^4n}{n\log\log n}$ and $p\leq \frac12$. Then {\red there is a polynomial time algorithm that finds a canonical labeling q.s.\footnote{A sequence of events $\cE_n,n\geq 1$ occurs quite surely (q.s.) if $\Pr(\cE_n)=1-O(n^{-K})$ for any positive constant $K$.} for $G_{n,p}$. In fact the running time of the algorithm is $O(n^2p)$ q.s.}
\end{theorem}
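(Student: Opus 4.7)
The plan is to extend the Babai--Erd\H{o}s--Selkow strategy of labeling vertices by their adjacency pattern with respect to a small canonical reference set. Specifically, I would construct, in a canonical way, a subset $S\subseteq V(G)$ of size $s$ together with a canonical ordering, and then label each vertex $v$ by its neighborhood characteristic vector $\chi_v\in \{0,1\}^s$ relative to $S$. Because $S$ and its ordering are defined from purely isomorphism-invariant data (the degree sequence and, if needed, refinements of it), the map $v\mapsto \chi_v$ is canonical, and it is a canonical labeling as soon as all the $\chi_v$ are pairwise distinct.

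First I would fix the size $s$ of the reference set. For any two distinct vertices $v,w\notin S$, independence of edges between $S$ and $\{v,w\}$ gives
\[
\Pr[\chi_v=\chi_w]=(p^2+(1-p)^2)^{s}\leq \exp\bigl(-2sp(1-p)\bigr).
\]
A union bound over all $\binom{n}{2}$ pairs combined with the q.s.\ failure requirement $O(n^{-K})$ forces $sp\gg \log n$. The hypothesis $p\gg \log^4 n/(n\log\log n)$ is exactly strong enough that we may take $s$ to be a small polylog factor times $\log n/p$, well below $np$, so that $S$ is sparse relative to the whole graph.

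Next I would construct $S$ canonically. Naively taking ``the $s$ vertices of largest degree'' fails in the sparse regime because the Binomial$(n-1,p)$ distribution is concentrated and many vertices share each degree value. Instead I would define $S$ as the set of vertices isolated by iterated degree refinement: start with the partition of $V$ into degree classes; in each round split each class by the sorted vector of degree classes of the neighbors; iterate $O(1)$ times. A Chernoff plus local-limit analysis shows that in the specified regime, q.s.\ one obtains at least $s$ singleton classes, canonically ordered by the refined label, whose representatives form $S$.

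The main obstacle is this last step: proving that refinement produces enough canonical singletons. The technical ingredients are (a) a local-limit estimate showing that for $d$ within $O(\sqrt{np\log n})$ of $np$ the number of vertices of degree exactly $d$ is sharply concentrated around $n\cdot \Pr[\mathrm{Bin}(n-1,p)=d]$, and (b) conditional on the degree sequence, the edges of $G_{n,p}$ still exhibit enough independence that sorted neighbor-degree multisets distinguish vertices within each degree class. Balancing (a) against (b) is what yields the $\log^4 n/\log\log n$ threshold on $np$. Once $S$ is in hand, the union bound of the second paragraph shows q.s.\ that the $\chi_v$ are all distinct, completing the canonical labeling.
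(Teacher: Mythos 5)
A preliminary remark: the paper does not prove Theorem \ref{th1} at all --- it is quoted from Czajka and Pandurangan \cite{CP} and used only as the base case $k=2$ of the induction behind Theorem \ref{th2} --- so your proposal has to stand on its own, and as written it has genuine gaps rather than being a complete alternative proof.

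The main gap is the reference-set step, and it is both structural and quantitative. Your union bound needs $sp\gg\log n$, i.e.\ $s\gg\log n/p$; at the bottom of the admissible range, $p\approx\log^4 n/(n\log\log n)$, this forces $s\gtrsim n\log\log n/\log^3 n=n^{1-o(1)}$. This contradicts your own assertion that such an $s$ is ``well below $np$'' (there $np$ is only polylogarithmic, and $\log n/p\gg np$ whenever $p\ll\sqrt{\log n/n}$), and, more importantly, it means the preliminary step must canonically isolate all but a $1/\mathrm{polylog}(n)$ fraction of the vertices by degree refinement alone --- essentially the full strength of the theorem --- yet this is exactly the step you dispatch with ``a Chernoff plus local-limit analysis shows,'' with no argument. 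In this regime each degree class has size roughly $n/\sqrt{np}$, so whether $O(1)$ rounds of refinement produce $n^{1-o(1)}$ canonical singletons q.s.\ is precisely the hard content, and it is where the $\log^4 n/(n\log\log n)$ threshold would have to emerge; nothing in the sketch delivers it, and your item (b) (residual independence after conditioning on the degree sequence) is likewise only asserted. A second, independent problem is the collision bound $\Pr[\chi_v=\chi_w]=(p^2+(1-p)^2)^s$: it treats the edges between $\{v,w\}$ and $S$ as fresh randomness, but $S$ and its ordering are functions of degree data that include those very edges, so without a two-round exposure or careful conditioning argument the bound is unjustified --- handling exactly this dependence is the technical heart of \cite{BES}. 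For comparison, \cite{CP} avoids a reference set entirely: it labels each vertex directly by the sorted degree sequence of its neighborhood and bounds collision probabilities for these multisets, which is where the hypothesis $p\gg\log^4 n/(n\log\log n)$ actually comes from; if you want a self-contained proof, that route (or repairing the two issues above) is what needs to be carried out.
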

Our first result concerns the random hypergraph $H_{n,p;k}$, the random $k$-uniform hypergraph on vertex set $[n]$ in which each of the possible edges {\red in} $\binom{[n]}{k}$ occurs independently with probability $p$. We say that two $k$-uniform hypergraphs $H_1,H_2$ are isomorphic if there is a bijection $f:V(H_1)\to V(H_2)$ such that $\set{x_1,x_2,\ldots,x_k}$ is an edge of $H_1$ if and only if  $\set{f(x_1),f(x_2),\ldots,f(x_k)}$ is an edge of $H_2$. 
\begin{theorem} \label{th2}
Suppose that $k\geq 3$ and $p,1-p\gg n^{-(k-2)}\log n$ then {\red there exists an $O(n^{2k})$ time algorithm that finds a canonical labeling for $H_{n,p;k}$ w.h.p.}
\end{theorem}
Bollob\'as \cite{B} and Kucera \cite{Ku} proved that random regular graphs have canonical labelings w.h.p. We extend the argument of \cite{B} to regular hypergraphs. A hypergraph is regular of degree $r$ if every vertex is in exactly $r$ edges. We denote a random $r$-regular, $k$-uniform hypergraph on vertex set $[n]$ by $H_{n,r;k}$.
\begin{theorem} \label{th3}
{\red Suppose that $r,k$ are constants. Then there is an $O(n^{8/5})$ time algorithm that finds a canonical labeling for $H_{n,r;k}$ w.h.p.}
\end{theorem}
\section{Proof of Theorem \ref{th2}}
Given $H=H_{n,p;k}$ we let $H_i$ denote the $(k-1)$-uniform hypergraph with vertex set $[n]\setminus \set{i}$ and edges $\set{{\red e\setminus \set{i}:\;i\in e\in E(H)}}$. {\red $H_i$ is known as the {\em link} associated with vertex $i$.} Let $\cE_k$ denote the event $\set{\not\exists i,j:H_i\cong H_j}$.
\begin{lemma}\label{lem1}
Suppose that $k\geq 3$ and $\om\to\infty$ and $p,1-p\geq \om n^{-(k-2)}\log n$. Then $\cE_k$ occurs q.s.
\end{lemma}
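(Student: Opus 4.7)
The plan is to apply a union bound over pairs $i\neq j$ and over bijections $\phi:[n]\setminus\{i\}\to[n]\setminus\{j\}$, showing that each such $\phi$ is an isomorphism from $H_i$ to $H_j$ with super-polynomially small probability. Replacing $H$ by its edge-complement swaps $p\leftrightarrow 1-p$ and replaces each $H_i$ by its complement, so I may assume without loss of generality that $p\leq 1/2$.

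Fix $i,j,\phi$, and write $X_S=\ind{S\in E(H)}$ for the i.i.d.\ $\mathrm{Bernoulli}(p)$ indicators on $k$-subsets $S$ of $[n]$. For each $(k-1)$-set $e\subseteq[n]\setminus\{i\}$ set $S_e=e\cup\{i\}$ and $T_e=\phi(e)\cup\{j\}$; if $\phi$ is an isomorphism then $X_{S_e}=X_{T_e}$ for every such $e$. To isolate an independent sub-collection of these constraints, let $v^{\ast}=\phi^{-1}(i)$ and
\[
\cM(\phi)=\set{e:\ j\notin e\text{ and }v^{\ast}\notin e}.
\]
For $e\in\cM(\phi)$, $S_e$ contains $i$ but not $j$ while $T_e$ contains $j$ but not $i$, so as $e$ varies over $\cM(\phi)$ the $2|\cM(\phi)|$ sets $S_e,T_e$ are pairwise distinct $k$-subsets. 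Hence the events $\set{X_{S_e}=X_{T_e}}_{e\in\cM(\phi)}$ are mutually independent, each of probability $p^2+(1-p)^2\leq 1-p$ (using $p\leq 1/2$, so that $2p(1-p)\geq p$).

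Because $|\cM(\phi)|\geq\binom{n-3}{k-1}\geq n^{k-1}/(2(k-1)!)$ for $n$ large and $p\geq\om\,n^{-(k-2)}\log n$,
\[
\Pr(\phi\text{ is an isomorphism})\leq (1-p)^{|\cM(\phi)|}\leq \exp\brac{-p\binom{n-3}{k-1}}\leq\exp\brac{-\frac{\om\,n\log n}{2(k-1)!}}.
\]
The number of triples $(i,j,\phi)$ is at most $n^2(n-1)!\leq\exp\brac{(1+o(1))n\log n}$, so
\[
\Pr(\cE_k^c)\leq\exp\brac{-\brac{\frac{\om}{2(k-1)!}-1-o(1)}n\log n}=O(n^{-K})
\]
for every fixed $K$ since $\om\to\infty$. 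This is the q.s.\ bound.

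The principal obstacle is designing the independent sub-collection $\cM(\phi)$. ``Diagonal'' $k$-subsets containing both $i$ and $j$ would otherwise appear simultaneously as some $S_e$ and as some $T_{e'}$, and self-loops $S_e=T_e$ occur when $\phi$ locally mimics the transposition $i\leftrightarrow j$; both phenomena couple constraints and obstruct the independence calculation that drives the exponential bound. Discarding constraints indexed by any $e$ that meets $\{j,v^{\ast}\}$ eliminates both issues at the cost of only a lower-order factor in $|\cM(\phi)|$, and the remaining matching on disjoint $k$-subsets is still large enough to overcome the $(n-1)!$ cost of summing over $\phi$.
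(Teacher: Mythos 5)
Your proof is correct and follows essentially the same route as the paper: a union bound over pairs $i,j$ and bijections, with independence obtained by restricting to $(k-1)$-sets avoiding a few special vertices so that the paired $k$-sets $S_e,T_e$ are all distinct, each constraint holding with probability $p^2+(1-p)^2$. Your version is in fact slightly cleaner (you exclude only $j$ and $\phi^{-1}(i)$, getting $\binom{n-3}{k-1}$ independent constraints, whereas the paper fixes two additional image/preimage vertices and uses $\binom{n-4}{k-1}$), but this is a bookkeeping difference, not a different argument.
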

\begin{proof}
\begin{align*}
\mathbb{P}(\exists i,j:H_i\cong H_j) &\leq n^4n! (p^2 + (1-p)^2)^{\binom{n-4}{k-1}}\\
&\leq 3n^{9/2}{\bfrac{n}{e}}^n (p^2+(1-p)^2)^{\binom{n-4}{k-1}p}\\
&\leq n^{-\om/k!}.
\end{align*}
{\bf Explanation:} There are $\binom{n}{2}$ choices for $i,j$. There are at most $n^2$ choices for $y=f(i),x=f^{-1}(j)$ in an isomorphism $f$ between $H_i$ and $H_j$. This accounts for the $n^4$ term. There are $(n-3)!<n!$ possible isomorphisms between $H_i-\set{y,j}$ and $H_j-\set{x,i}$. Then for every $(k-1)$-set of vertices $S$ that includes none of $i,j,x,y$, the probability for there to be an edge or non-edge in both $H_i$ and $H_j$ is given by the expression $p^2 + (1-p)^2$. 

The above estimation shows that even disregarding edges containing $i,j,x$ or $y$, w.h.p. there are no $i,j$ with $H_i \cong H_j$.
\end{proof}
Let $\cG_k$ be the event that {\red a canonical labeling for $H_{n,p;k}$} can be constructed in $O(n^{2k})$ time. Now assume inductively that
\beq{ind}{
\Pr(H_{n,p;k}\notin \cG_k)\leq n^{-\om/(k+1)!}.
}
The base case, $k=2$, for \eqref{ind} is {\red  given by the result of  \cite{L}, although in addition \cite{K}, \cite{CP} can be used for constant $p$}. Let $\cB_i$ be the event that $H_i\notin \cG_{k-1}$. Then
\beq{1}{
\Pr(H_{n,p;k}\notin \cG_k)\leq {\red (1-\Pr(\cE_k))}+\sum_{i=1}^n\Pr(\cB_i).
}
Indeed, if none of the events in \eqref{1} occur then in time $O(n^2\times n^{2(k-1)})=O(n^{2k})$ we can by induction uniquely label each vertex via the {\red labelled link}. After this we can confirm that $\cE_k$ has occurred. This confirms the claimed time complexity. Given that {\red $\cE_k$ has occured}, this will determine the only possible isomorphism {\red $\f$ between $H$ and any other $k$-uniform hypergraph $H'$ on vertex set $[n]$. We determine $\f$ by comparing the links of $H,H'$, using induction to see if they are isomorphic. We can see if there is a mapping $\f$ such that the link of $i$ in $H$ is isomorphic to the link of $\f(i)$ in $H'$ then and then we check to see whether or not $\f$ is actually an isomorphism.}

Going back to \eqref{1} we see by induction that
\[
\Pr(H_{n,p;k}\notin \cG_k)\leq n^{-\om/k!}+n^2\times (k-1)n^{2k-2}n^{-\om/k!}\leq n^{-\om/(k+1)!}.
\]
This completes the proof of Theorem \ref{th2}.
\section{Proof of Theorem \ref{th3}}
We extend the analysis of Bollob\'as \cite{B} to hypergraphs. For a vertex $v$, we let $d_\ell(v)$ denote the number of vertices at hypergraph distance $\ell$ from $v$ in $H=H_{n,r;k}$. We show that if 
\[
\ell^*=\rdup{\frac{3}{5}\log_{\r}n}\text{ where }\r=(r-1)(k-1).
\]
then w.h.p. no two vertices have the same sequence $(d_\ell(v),\ell=1,2,\ldots,\ell^*)$. By doing a breadth first search from each vertex of  $H$ we can therefore w.h.p. distinctly label each vertex within $O(n\r^{\ell^*})=O(n^{8/5})$ steps.

We use the {\red configuration} model for hypergraphs, which is a simple generalisation of the model in Bollob\'as \cite{B1}. We let {\red $W=[rn]$} where $m=rn/k$ is an integer. Assume that it is partitioned into sets $W_1,W_2,\ldots,W_n$ of size $r$. We define $f:W\to [n]$ by $f(w)=i$ if $w\in W_i$. A configuration $F$ is a partition of $W$ into sets $F_1,F_2,\ldots,F_m$ of size $k$. Given $F$ we obtain the (multi)hypergraph $\g(F)$ where $F_i=\set{w_1,w_2,\ldots,w_k}$ gives rise to the edge $\set{f(w_1),f(w_2),\ldots,f(w_k)}$ for $i=1,2,\ldots,m$.
{\red Configurations can contain multiple edges and loops. Nevertheless, it is known that if $\g(F)$ has a hypergraph property w.h.p. then $H_{n,r;k}$ will also have this property w.h.p.,} see for example \cite{CFMR}. 

In the following $H=H_{n,r;k}$. For a set $S\subseteq [n]$, we let $e_H(S)$ denote the number of edges of $H$ that are contained in $S$.
\LEM{lemiso1}{
Let $\ell_0=\rdup{100\log_{\r}\log n}$. Then w.h.p., $e_H(S)<\frac{|S|+1}{k-1}$ for all $S\subseteq [n], |S|\leq {\red 10\ell_0}$.
}
\begin{proof}
We have that 
\begin{align}
&\mathbb{P}\brac{\exists S:|S|\leq 10\ell_0,e_H(S)\geq \frac{|S|+1}{k-1}}\nonumber\\
&\leq \sum_{s=4}^{10\ell_0}\binom{n}{s}\binom{sr}{\frac{s+1}{k-1}} \bfrac{\binom{sr}{k-1}}{\binom{km-10k\ell_0}{k-1}}^\frac{s+1}{k-1}\label{erf1}\\
&\leq  \sum_{s=4}^{10\ell_0}\bfrac{ne}{s}^s(er(k-1))^{\frac{s+1}{k-1}}\bfrac{rs}{rn-o(n)}^{s+1}\nonumber\\
&\leq\frac{1}{n^{1-o(1)}}\sum_{s=4}^{10\ell_0}se^s\brac{e(k-1)r}^{\frac{s+1}{k-1}} =o(1).\nonumber
\end{align}
{\red
{\bf Explanation for \eqref{erf1}:} we choose a set $S$ and then a set $X$ of $(s+1)/(k-1)$ members of $W_S=\bigcup_{i\in S}W_i$. We then estimate the probability that each member of $X$ is paired in $F$ with $k-1$ members of $W_S\setminus X$. For each $x\in X$, given some previous choices, there are at most $\binom{sr}{k-1}$ choices contained in $W_S$, out of at least $\binom{km-10k\ell_0}{k-1}$ choices overall.}
\end{proof}
Let $\cE$ denote the high probability event in Lemma \ref{lemiso1}. We will condition on the occurrence of $\cE$.

Now for $v\in [n]$, let $S_\ell(v)$ denote the set of vertices at distance $\ell$ from $v$ and let $S_{\leq \ell}(v)=\bigcup_{j\leq \ell}S_{j}(v)$. {\red Here the distance between vertices $u,v$ is the minimum length of a path/sequence of edges $e_1,e_2,\ldots,e_k$ such that $u\in e_1,v\in e_k$ and $e_i\cap e_{i+1}\neq \emptyset$ for $1\leq i<k$.} We note that
{\red
\beq{smallSk}{
|S_\ell(v)|\leq (k-1)r\r^{\ell-1}\text{ for all }v\in [n],\ell\geq 1.
}}
Furthermore, Lemma \ref{lemiso1} implies that there exist $b_{r,k}<a_{r,k}<(k-1)r$ such that w.h.p., we have for all $v,w\in [n], 1\leq \ell\leq \ell_0$, 
\begin{align}
|S_\ell(v)|&\geq a_{r,k}\r^{\ell-1}.\label{smallSk1}\\
|S_\ell(v)\setminus S_\ell(w)|&\geq b_{r,k}\r^{\ell-1}.\label{smallSk1a}
\end{align}
{\red 
To see this, observe that $|S_{\ell+1}|=\r|S_{\ell}|$ unless there are two vertices $x,y\in S_\ell$ and either (i) an edge $e$ of $\g(F)$ such that $e\supseteq \set{x,y}$ or (ii) a vertex $z\in S_{\ell+1}$ and edges $e,f$ of $\g(F)$ such that $e\supseteq \set{x,z}$ and $f\supseteq \set{y,z}$. Lemma \ref{lemiso1} implies that w.h.p. there is at most one such case of (i) or (ii) for $1\leq \ell\leq \ell_0$. Suppose that there are two distinct edges $e_i,i=1,2$ that cause (i) at levels $\ell_1,\ell_2$ and suppose that $\set{x_i,y_i}$ corresponds to $e_i,i=1,2$. Each $x\in S_\ell$ lies in the final edge of an $\ell$-length path $P_u$ from $v$ to $x$. Now $\cP=P_{x_1}, P_{x_2},P_{y_1},P_{y_2}$ spans $\Pi\leq 2(\ell_1+\ell_2)\leq 2\ell_0$ edges and we can choose these paths to not contain $e_1$ or $e_2$. Furthermore, $\cP$ spans at most $1+(k-1)\Pi$ vertices, since adding a new edge to a connected set of vertices adds at most $k-1$ new vertices. If we add $e_1,e_2$ to these $\Pi$ edges then we have at most $1+(k-1)\Pi+2(k-2)$ vertices spanning $\Pi+2$ edges and this contradicts Lemma \ref{lemiso1}. The remaining two cases (2 times (ii) or (i) and (ii)) can be argued similarly.
So, typically adding an edge in the construction of $S_{\ell+1}$ adds $k-1$ new vertices. W.h.p., there is one case and this only adds $k-2$ vertices. This explains \eqref{smallSk1}. 

A similar argument yields \eqref{smallSk1a}. Having constructed $S_{\ell}(w)$, we see that typically adding an edge  in the construction of $S_{\ell}(v)$ adds $k-1$ new vertices to the union  $S_{\ell}(v)\cup  S_{\ell_0}(w)$ and that w.h.p. it adds at least $k-2$ vertices.
}

Now consider $\ell>\ell_0$. Consider doing breadth first search from $v$ or $v,w$ exposing the configuration pairing as we go. Let an edge be {\em dispensable} if it contains two vertices already known to be in $S_{\leq \ell}$. {\red The argument above} implies that w.h.p. there is at most one dispensable edge in $S_{\leq\ell_0}$. 
\LEM{lemiso2}{
With probability $1-o(n^{-2})$, (i) at most 20 of the first $n^{\frac{2}{5}}$ exposed edges are dispensable and (ii) at most $n^{1/4}$ of the first $n^{\frac{3}{5}}$ exposed edges are dispensable.
}
\begin{proof}
The probability that the $\s$th edge is dispensable is at most \\{\red $\frac{r((\s-1)(k-1)+1)(k-1)}{rn-k\s}$}, independent of the history of the process. {\red (Knowing one vertex of this edge and choosing the rest of it, there are at most  $r((\s-1)(k-1)+1)(k-1)$ choices out of at least $rn-k\s$ that will lead to this edge being dispensable.)} Hence,
\multstar{
\mathbb{P}(\exists\; 20\text{ dispensable edges in the first }n^{2/5})\leq \binom{n^{2/5}}{20}\bfrac{{\red rk^2}n^{2/5}}{rn-o(n)}^{20}\\
=o(n^{-2}).
}
\multstar{
\mathbb{P}(\exists\; n^{1/4}\text{ dispensable edges in first }n^{3/5})\leq \binom{n^{3/5}}{n^{1/4}}\bfrac{{\red rk^2}n^{3/5}}{rn-o(n)}^{n^{1/4}}\\
=o(n^{-2}).
}
\end{proof}
Now let {\red $\ell_1=\rdup{\log_{r-1}n^{2/5}}$ and $\ell_2=\rdup{\log_{r-1}n^{3/5}}$}. Then, we have that, conditional on $\cE$, with probability $1-o(n^{-2})$,
\begin{align*}
&|S_\ell(v)|\geq (a_{r,k}\r^{\ell_0-1}-40)\r^{\ell-\ell_0}:\;\ell_0<\ell\leq\ell_1.\\
&|S_\ell(v)|\geq (a_{r,k}\r^{\ell_1-1}-40\r^{\ell_1-\ell_0}-2n^{1/4})\r^{\ell-\ell_1};\; \ell_1<\ell\leq\ell_2.\\
&|S_\ell(w)\setminus S_\ell(v)|\geq (b_{r,k}\r^{\ell_0-1}-40)\r^{\ell-\ell_0}:\;\ell_0<\ell\leq\ell_1.\\
&|S_\ell(w)\setminus S_\ell(v)|\geq (b_{r,k}\r^{\ell_1-1}-40\r^{\ell_1-\ell_0}-2n^{1/4})\r^{\ell-\ell_1};\; \ell_1<\ell\leq\ell_2.
\end{align*}
We deduce from this that if $\ell_3=\rdup{\log_{r-1}n^{4/7}}$ and $\ell=\ell_3+a,a=O(1)$ then with probability $1-o(n^{-2})$,
\begin{align*}
|S_\ell(w)|&\geq (a_{r,k}-o(1))\r^{\ell-1}\approx a_{r,k}\r^{a-1}n^{4/7} .\\
|S_\ell(w)\setminus S_\ell(v)|&\geq (b_{r,k}-o(1))\r^{\ell-1}\approx b_{r,k}\r^{a-1}n^{4/7} .
\end{align*}
Suppose now that we consider the execution of breadth first search up until we have exposed {\red $S_{\ell+1}(v)\cup S_{\ell+1}(w)$ and the edges $\wh{E}$ defining this set. We let $U=W\setminus \wh{E}$. We will show that we can find a position in the process so that conditioning up to this point, in order to have $|S_{\ell+1}(v)|=|S_{\ell+1}(w)|$ there will have to be a prescribed, but unlikely, outcome for a large number of edge selections. 

Our conditioning includes all the choices of $e\in F$ that are necessary to construct $S_{\ell+1}(v)\cup S_{\ell}(w)$. We refer to a choice of $e$ as an {\em edge-selection}. After an edge-selection $e$, we update $U\gets U\setminus \set{e}$. Consider the edge-selections involving $W_x,x\in S_\ell(w)\setminus S_{\ell+1}(v)$. Now at most $n^{1/4}$ of these edge-selections involve vertices in $S_{\leq \ell+1}(v)\cup S_{\leq \ell}(w)$. Condition on these as well.  There must now be $\l=\Theta(n^{4/7})$ further  edge-selections containing elements of $W_x,x\in S_\ell(w)\setminus S_{\ell+1}(v)$ and $W_y,y\notin S_{\ell+1}(v)\cup S_\ell(w)$. Let $Z$ denote the vertices in $S_\ell(w)$ involved in these $\l$ edge-selections. Furthermore, to have  $|S_{\ell+1}(v)|=|S_{\ell+1}(w)|$ these $\l$ selections must involve exactly $t$ of the sets  $W_y,y\notin S_{\ell+1}(v)\cup S_\ell(w)$. Here $t$ is the unique value that will ensure that $|S_{\ell+1}(w)|=|S_{\ell+1}(v)|$. The important point is that $t$ is determined {\em before} the making of these $\l$ edge-selections. Let $R=\bigcup_{y\notin S_{\ell+1}(v)\cup S_\ell(w)}W_y$ at the point immediately prior to the $\l$ edge-selections. Let $S=R\cap \bigcup_{e:e\cap Z\neq \emptyset}e$ and note that $S$ is a random $s$-subset of $R$ for some $s=\Theta(n^{4/7})$.

The following lemma will easily show that w.h.p. $H$ has a canonical labeling defined by the values of $|S_{\ell}(v)|, 1\leq \ell\leq \ell^*,\,v\in [n]$.
}
\LEM{isomain}{
Let $R=\bigcup_{i=1}^\m R_i$ be a partitioning of an $r\m$ set $R$ into $\m$ subsets of size $r$. Suppose that $S$ is a random $s$-subset of $R$, where $\m^{5/9}<s<\m^{3/5}$. Let $X_S$ denote the number of sets $R_i$ intersected by $S$. Then
\[
\max_j\mathbb{P}(X_S=j)\leq \frac{c_0\m^{1/2}}{s},
\]
for some constant $c_0$.
}
\begin{proof}
The probability that $S$ has at least 3 elements in some set $R_i$ is at most
\[
\frac{\m\binom{r}{3}\binom{r\m-3}{s-3}}{\binom{r\m}{s}}\leq \frac{s^3}{6\m^2}\leq \frac{\m^{1/2}}{6s}.
\]
But
\[
\mathbb{P}(X_S=j)\leq \mathbb{P}\brac{\max_i|S\cap R_i|\geq 3}+\mathbb{P}\brac{X_S=j\text{ and }\max_i|S\cap R_i|\leq 2}.
\]
So the lemma will follow if we prove that for every $j$,
\beq{lemfollow}{
P_j=\mathbb{P}\brac{X_S=j\text{ and }\max_i|S\cap R_i|\leq 2}\leq \frac{c_1\m^{1/2}}{s},
}
for some constant $c_1$.

Clearly, $P_j=0$ if $j<s/2$ and otherwise
\beq{defPj}{
P_j=\frac{\binom{\m}{j}\binom{j}{s-j}r^{2j-s}\binom{r}{2}^{s-j}}{\binom{r\m}{s}}.
}
Now for $s/2\leq j<s$ we have
\beq{Pjrat}{
\frac{P_{j+1}}{P_j}=\frac{(\m-j)(s-j)}{(2j+2-s)(2j+1-s)}\frac{2r}{r-1}.
}
We note that if $s-j\geq \frac{10s^2}{\m}$ then $\frac{P_{j+1}}{P_j}\geq \frac{10r}{3(r-1)}\geq 2$ and so the $j$ maximising $P_j$ is of the form $s-\frac{\a s^2}{\m}$ where $\a\leq 10$. If we substitute $j=s-\frac{\a s^2}{\m}$ into \eqref{Pjrat} then we see that
\[
 \frac{P_{j+1}}{P_j}\in \frac{2\a r}{r-1}\left[1\pm c_2\frac{s}{\m}\right]
\]
for some absolute constant $c_2>0$.

It follows that if $j_0$ is the index maximising $P_j$ then
\[
\card{j_0-\brac{s-\frac{(r-1)s^2}{2r\m}}}\leq 1.
\]
Furthermore, if $j_1=j_0-\frac{s}{\m^{1/2}}$ then 
\[
 \frac{P_{j+1}}{P_j}\leq 1+c_3\frac{\m^{1/2}}{s}\text{ for }j_1\leq j\leq j_0,
\]
for some absolute constant $c_3>0$.

This implies that for all $j_1 \le j \le j_0$,
\multstar{
P_j\geq P_{j_0}\brac{1+c_3\frac{\m^{1/2}}{s}}^{-(j_0-j_1)}\\= P_{j_0}\exp\set{-(j_0-j_1)\brac{c_3\frac{\m^{1/2}}{s}+O\bfrac{\m}{s^2}}}
\geq P_{j_0}e^{-2c_3}.
}
It follows from this that 
\[
P_{j_0} \le e^{2c_3} \min_{j \in [j_1,j_0]} P_j \le \frac{e^{2c_3}}{j_0 - j_1} \sum_{j \in [j_1,j_0]} P_j \le \frac{e^{2c_3}\m^{1/2}}{s}.
\]
\end{proof}
We apply Lemma \ref{isomain} with $\m=n-o(n),s=\Theta(n^{4/7})$ to show that 
\[
\mathbb{P}((|S_\ell(v)|=|S_\ell(w)|,\ell\in [\ell_3,\ell_3+14])\leq \bfrac{c_0n^{1/2}}{n^{4/7}}^{15}=o(n^{-2}).
\]
This completes the proof of Theorem \ref{th3}.

\end{document}